\documentclass[12pt]{amsart}    

\usepackage{amsmath,amssymb,amsthm,enumerate,mathabx}

\newtheorem{Theorem}{Theorem}[section]

\newtheorem{Lemma}[Theorem]{Lemma}
\newtheorem{Corollary}[Theorem]{Corollary}
\newtheorem{Theorem1}{Theorem}
 
\theoremstyle{definition}
\newtheorem{Definition}[Theorem]{Definition}
\newtheorem{Fact}[Theorem]{Fact}

\newtheorem{Remark}[Theorem]{Remark}
\newtheorem{Question}{Question}

\DeclareMathOperator{\cl}{cl_S}
\DeclareMathOperator{\dcl}{dcl}
\DeclareMathOperator{\tp}{tp}

\DeclareMathOperator{\dom}{dom}
\DeclareMathOperator{\acc}{acc}

\usepackage[margin=23mm]{geometry}

\title{Vaught's conjecture for theories of discretely ordered structures}
\author{Predrag Tanovi\'c}
\thanks{This research was supported by the Science Fund of the Republic of 
Serbia, Grant No. 7750027: Set-theoretic, model-theoretic and 
Ramsey-theoretic phenomena in mathematical structures: similarity and 
diversity–SMART}
\address{Mathematical Institute SANU, Belgrade, Serbia} 
\email{tane [at] mi.sanu.ac.rs}
\begin{document}
\maketitle
\begin{abstract} 
Let $T$ be a countable
complete first-order theory with a definable, infinite, discrete linear order. We prove that $T$ has continuum-many countable models. The proof  is purely first-order,  but raises the question of Borel completeness of $T$.
\end{abstract}

\bigskip
Throughout the paper, $L$ will be an at most countable  language and $T$  an arbitrary (possibly multi-sorted), complete,  first-order theory with infinite models;  $I(\aleph_0,T)$ will denote the number of countable models of $T$, up to 
isomorphism.  We say that $T$  admits an infinite,  discrete, linear order, if such a parametrically definable order $(D,<)$ can be found in some   $\aleph_0$-saturated model $M\models T$ with $D\subseteq M^n$; by adding an extra sort, if necessary, we may always assume $D\subseteq M$. 
Our goal in this paper is to present a complete, self-contained, elementary   proof of the following theorem.

\begin{Theorem1}
If $T$ admits an infinite, definable, discrete, linear order, then $I(\aleph_0,T)=2^{\aleph_0}$. 
\end{Theorem1}

We will also prove that the conclusion of Theorem 1 holds for theories admitting a relatively definable discrete order on the locus of some complete type; that  will be useful in the continuation of our work, started in \cite{MT}, on Vaught's conjecture for weakly quasi-o-minimal theories. 

The original Vaught's conjecture, see \cite{Vaught}, states that  a complete, countable theory $T$ either has at most countably many or continuum-many non-isomorphic countable models, independently of the continuum hypothesis. Concerning theories of ordered structures, Rubin in \cite{Rubin} confirmed the conjecture for theories of colored orders (linear orders with unary predicates), Shelah in \cite{Sh} for theories with a definable linear order and Skolem functions, Mayer in \cite{Mayer} for $o$-minimal theories  and Moconja and the author in \cite{MT} for (a  superclass of) binary, weakly quasi-$o$-minimal theories. 
In \cite{Steel}, Steel used descriptive set theory and infinitary model theory methods to prove a stronger version of the conjecture for theories of trees: every consistent $L_{\omega_1\,\omega}$-sentence has either countably many or perfectly many countable models. It has been known, but up to our knowledge never published, that Steel's methods might lead to a proof of Theorem 1. 
However, in this paper we will show that there are (at least) three sources from which an elementary, first-order proof of Theorem 1 may be deduced: Shelah's paper \cite{Sh}, the author's paper on types directed by constants \cite{T}, and the paper \cite{DI} by Ili\'c.
Since our aim here is to present a complete, but as short as possible proof, the third source turned out to be the most convenient. 
The central notion in \cite{DI} is that of a  {\it simple} type in discretely ordered structures, see Definition \ref{Definition_simple} below. It is  rather routine to show if the thory $T$ is small and interprets an infinite definable\footnote{Throughout the paper, definable means without  parameters.} discrete order $(D,<)$, then simple types exist in $T(c)$ for all $c\in D$; that  will be done in Lemma \ref{Lemma_simpleExist} below. Then we will show how Theorem 1 can be deduced from either \cite{Sh} or \cite{T}. But, in order to present a self-contained proof of Theorem 1, 
in Section 2 we will modify arguments from \cite{DI} and prove the following theorem.

\begin{Theorem1}
Suppose that $p\in S_1(T)$ is a simple type, as witnessed by $(C,<)$ and $(D,<)$. Then for every countable, discrete linear order without end-points $\mathbb L$, there is a countable model $M_{\mathbb L}\models T$ such that $(p(M_{\mathbb L}),<)\cong \mathbb L$. In particular,   $I(\aleph_0,T)=2^{\aleph_0}$. 
\end{Theorem1}

Theorem 1 is a consequence of Theorem 2: If $T$ is not small, then  $I(\aleph_0,T)=2^{\aleph_0}$ follows, so let $T$ be a small theory with an infinite discrete  order $(D,<)$ definable in a model of $T$. Then, by Lemma \ref{Lemma_simpleExist}, there exists a simple type in $S_1(c)$ for all $c\in D$, so $I(\aleph_0,T(c))=2^{\aleph_0}$ follows by Theorem 2. Since in any countable model of $T$ a new constant can be interpreted in countably many ways,  $I(\aleph_0,T)\cdot\aleph_0\geq  I(\aleph_0,T(a)) $ follows.
Therefore, $I(\aleph_0,T)=2^{\aleph_0}$ holds.  

\smallskip
The conclusion of Theorem 2, that any countable linear order can be ``coded'' in a model of $T$, is stronger than $I(\aleph_0,T)=2^{\aleph_0}$ and 
naturally raises the following question.

\begin{Question}
Is every  countable, complete theory that interprets an infinite discrete order Borel complete?
\end{Question}

Section 1 contains preliminaries, basic properties of simple types and sketches of proofs of Theorem 1 that rely on \cite{Sh} and \cite{T}. In Section 2 arguments from \cite{DI} are adapted to prove Theorem 2. Section 3 contains corollaries of Theorem 1, further  discussion on Question 1 and further questions.

\section{Preliminaries}

Let $(P,<)$ be a linear order and $X,Y\subseteq P$. $X<Y$ means that $x<y$ for all $x\in X$ and $y\in Y$.
$x<Y$ means  $\{x\}<Y$.  By $[x,y]$ we will denote the closed interval $\{z\in P\mid x\leq z\leq y\}$.  $X$ is an {\it initial part} if $x\in X$ and $y<x$ imply $y\in X$; $X$ is {\it convex} if $x,y\in X$ and $x<y$ imply $[x,y]\subseteq X$.
The element $y\in P$ is an immediate successor of $x\in P$, denoted by $S(x)=y$, if $x<y$ and no element of $P$ is strictly between $x$ and $y$. We will view $S$ as a partial function on $P$ and $S^m$ will be the $m$-th iterate of $S$; since $S$ is one-to-one, $S^m$ is a well-defined partial function for all  integers $m$. The minimal element of $X$ (or the left end of $X$), if it exists, is denoted by $\min X$; similarly for $\max X$. $(P,<)$ is a discrete order if the induced order-topology is discrete; equivalently: $S(x)$ is defined for all elements of $P$ except $\max P$ (if it exists) and  $S^{-1}(x)$ is defined for all elements of $P$ except $\min P$ (if it exists). 

Our model theoretic notation is mainly standard. 
We work within a fixed  monster model $M\models T$.  $a,b,c,a'...$ denote elements,  $\bar a,\bar b,...$ tuples of elements, and $A,B,...$ small (usually countable)  subsets of $M$; the letters $D,P,X,...$ may denote  any sets. Definability means without parameters and by partial types we will mean those that are closed under finite conjunctions. If $D\subseteq M$ and $\theta(\bar x)$ is a $L_M$-formula,  then  
 $\theta(D)=\{\bar d\in D^{|\bar x|}\mid M\models \theta(\bar d)\}$ is the set of realizations of  $\theta(\bar x)$ in $D$; similarly for partial  types. $\dcl(A)$ denotes the definable closure of $A$ in $M$: $a\in\dcl(A)$ means that there is a $L_A$-formula whose only realization in $M$ is $a$.  
$S_n(A)$ denotes the space of all complete $n$-types with parameters from $A$ endowed with the formula topology (basic clopen sets are $U_{\phi}=\{q\mid \phi(\bar x)\in q\}$ $\phi(\bar x)\in L_A$); 
$S_n(T):=S_n(\emptyset)$). The theory $T$ is {\it small} if   $\bigcup_{n\in\mathbb N}S_n(T)$ is countable. If $T$ is not small,  then  $I(\aleph_0,T)=2^{\aleph_0}$. 

Let $\Pi(\bar x)$ be a partial type over $A$ and $X\subseteq \Pi(M)$. We say that the set $X$ is relatively $A$-definable within $\Pi(M)$ if there is a $L_A$-formula $\theta(x)$ with $X=\Pi(M)\cap \theta(M)$; in this case we say that $\theta(x)$ relatively defines $X$ within $\Pi(M)$. 
If the sets $X_1,X_2$ are type-definable over $A$, then the function $f: X_1\to X_2$  is relatively $A$-definable if its graph is such.

\begin{Fact}\label{Fact1}
(a) Any relatively definable linear order has a definable extension. More precisely, if $\Pi(x)$ is a partial type over $A$ and $\phi(x,y)$  an  $L_A$-formula which relatively defines a linear order on $\Pi(M)$, then $\phi(x,y)$ defines a linear order on $\theta(M)$ for some  formula $\theta(x)\in \Pi(x)$.

(b)  Any relatively definable function has a definable extension. 
\end{Fact}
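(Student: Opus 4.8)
The plan is to prove both parts by the same compactness scheme, exploiting that the defining conditions for a linear order (respectively, for a function graph) are universally quantified over the elements of the relevant domain, and hence persist when that domain is shrunk. I begin with (a). Writing $\Pi(M)=\bigcap_{\psi\in\Pi}\psi(M)$, I record the linear-order axioms for $\phi$ relativized to $\Pi$: reflexivity (or irreflexivity), antisymmetry, transitivity and totality. For instance, transitivity amounts to the assertion that the partial type $\Pi(x)\cup\Pi(y)\cup\Pi(z)\cup\{\phi(x,y),\phi(y,z),\neg\phi(x,z)\}$ is inconsistent, which holds by hypothesis; the other axioms are analogous inconsistencies in one or two variables.

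For each axiom I would apply compactness to the corresponding inconsistent partial type; since $\Pi$ is closed under finite conjunction, this yields a single formula $\theta_i\in\Pi(x)$ (used as $\theta_i(x),\theta_i(y),\theta_i(z)$ after renaming) such that the axiom holds for all elements of $\theta_i(M)$. Taking $\theta$ to be the conjunction of the finitely many $\theta_i$, which again lies in $\Pi(x)$, I get $\theta(M)\subseteq\theta_i(M)$ for every $i$. Because each order axiom is universal over the domain, its validity on $\theta_i(M)$ is inherited by the smaller set $\theta(M)$; hence all axioms hold on $\theta(M)$ simultaneously and $\phi(x,y)$ defines a linear order there, as required.

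For (b), let $X_1=\Pi_1(M)$ and $X_2=\Pi_2(M)$, and let $\psi(x,y)\in L_A$ relatively define the graph of $f$ within $(\Pi_1\times\Pi_2)(M)$. The relevant universal condition is single-valuedness: the partial type $\Pi_1(x)\cup\Pi_2(y)\cup\Pi_2(y')\cup\{\psi(x,y),\psi(x,y'),y\neq y'\}$ is inconsistent. By compactness and closure of the types under conjunction I obtain $\theta_1\in\Pi_1$ and $\theta_2\in\Pi_2$ such that $\psi$ is single-valued from $\theta_1(M)$ into $\theta_2(M)$. I then let $g(x)$ be the unique $y$ with $\theta_2(y)\wedge\psi(x,y)$, whose natural domain $D=\{x\mid\exists! y\,(\theta_2(y)\wedge\psi(x,y))\}$ is $A$-definable.

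It remains to check $X_1\subseteq D$ and $g|_{X_1}=f$. For $x\in X_1$ the value $f(x)$ lies in $X_2\subseteq\theta_2(M)$ and satisfies $\psi(x,f(x))$, so a witness exists; uniqueness within $\theta_2(M)$ follows from single-valuedness, since $x\in\theta_1(M)$. Thus $x\in D$ and $g(x)=f(x)$, so $g$ is the desired definable extension of $f$. The only genuine subtlety in either part is an asymmetry of restriction: for (b) one must restrict the codomain to $\theta_2(M)$, rather than to all of $M$, in order to suppress spurious values of $\psi$ outside $X_2$, while existence of a value is supplied for free by $f$ itself; for (a) the corresponding point is merely that the order axioms, being universal, transfer downward from $\theta_i(M)$ to the common refinement $\theta(M)$.
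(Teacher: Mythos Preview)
Your proof is correct and follows essentially the same compactness argument as the paper. The only cosmetic difference is that the paper packages the order axioms into a single formula $\lambda(x,y,z)$ and applies compactness once, whereas you treat each axiom separately and then conjoin the resulting $\theta_i$; for part (b) the paper gives no details at all, and your argument is a perfectly good way to fill them in.
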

\begin{proof}
Routine compactness, we sketch only proof of part (a). Define:
$$\lambda(x,y,z):=\lnot\phi(x,x)\land (x\neq y\rightarrow \lnot(\phi(x,y)\leftrightarrow\phi(y,x)))\land (\phi(x,y)\land\phi(y,z)\rightarrow\phi(x,z)).$$
Since the formula $\phi(x,y)$ defines a linear order on $\Pi(M)$ we have:
$$\Pi(x)\cup\Pi(y)\cup\Pi(z)\vdash \lambda(x,y,z) .$$
By compactness there exists a formula $\theta(x)\in \Pi(x)$ with  $ \{\theta(x),\theta(y),\theta(z)\}\vdash\lambda(x,y,z)$.  
\end{proof}

Let  $\pi(x)$ be a partial type over $A$  and  $C\subseteq \dcl(A)$. We say that $\pi(x)$ is {\it finitely satisfied} in $C$ if every  formula of $\pi(x)$ is satisfied by some element of $C$. A complete type $p\in S_1(A)$ is finitely satisfied in $C$ if and only if $p$ is in the topological closure of $\mathcal C=\{\tp(c/A)\mid c\in C\}$; here, either  $p\in\mathcal C$, or $p$ is an accumulation point of $\mathcal C$; in the latter case,   we say that $p$ is a $C$-type. If the set $A$ is countable, then the space $S_1(A)$ is compact and separable, so whenever $C$ is infinite then a $C$-type $p\in S_1(A)$ exists. $p$ is the unique $C$-type in $S_1(A)$ if and only if for all $L_A$-formulae $\phi(x)$, exactly one of the sets $\phi(C)$ and $\lnot\phi(C)$ is infinite.

\begin{Definition}\label{Definition_simple}
A complete type $p\in S_1(A)$ is {\it simple} if there is an infinite set $C\subset\dcl(A)$ and an $A$-definable linear order $(D,<)$ such that the following two conditions hold:

(1) $p$ is the unique $C$-type in $S_1(A)$;

(2) $C$ is an initial part of $D$ ordered in the order-type $\boldsymbol{\omega}$.
\end{Definition}

\begin{Remark}
(i) Condition (1) in the previous definition may be replaced by the following, equivalent one \ : 
  for all $\phi(x)\in p$ the set $\lnot \phi(C)$ is finite. 

\smallskip
(ii)  If $p\in S_1(T)$ is a simple type, as witnessed by $C$ and $(D,<)$, then $p(M)\subset D$ holds: Indeed, any formula defining $D$, say $x\in D$, is satisfied by all elements of $C$, so, since $p$ is a $C$-type, $(x\in D)\in p$ and hence $p(M)\subseteq D$. 

\smallskip (iii) Observe that we did  not require in the definition that the witnessing order $(D,<)$ is discrete.  However, by Lemma \ref{Lemma_basic_simple}(b)  a discrete witness will always exists. 
\end{Remark}

\begin{Lemma}\label{Lemma_basic_simple}
If   $p\in S_1(A)$ is a simple type, as witnessed by $C\subset\dcl(A)$ and $(D,<)$, then:

(a) $P(M):=C\cup p(M)$ is an initial part  of $(D,<)$ that is type-definable over $A$.

(b) There exists an  $A$-definable, discretely ordered, initial part $D_0\subseteq D$  with $P(M)\subseteq D_0$. In particular, $(p(M),<)$ is a discrete order.  

(c) Every nonempty, relatively $M$-definable subset of  $P(M)$ has a  minimum.  

(d) If $C$ and $(D_1,<_1)$ also witness the simplicity of $p$ such that $<_1$ and $<$ agree on $C$, then they agree on $P(M)$, too. 
\end{Lemma}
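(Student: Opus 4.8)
The plan is to treat the four parts in the order (a), (b), and then (c) and (d), since each later part leans on the structural picture established in (a): that $C$ sits as an initial $\boldsymbol\omega$ at the bottom, with $p(M)$ a union of $S$-chains immediately above it and nothing in between.

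\emph{Part (a), type-definability.} First I record that $C<p(M)$: for $c\in C$ the set $\lnot(x>c)(C)=\{c'\in C\mid c'\le c\}$ is a finite initial segment of $C\cong\boldsymbol\omega$, so by Remark (i) the formula $x>c$ lies in $p$; hence every realization of $p$ is above every element of $C$. For each $\phi(x)\in p$ put $c_\phi:=\max(\lnot\phi(C))$ when $\lnot\phi(C)\ne\emptyset$ (an element of $C\subseteq\dcl(A)$, hence an $L_A$-term) and set $\phi^{*}(x):=\phi(x)\lor(x\le c_\phi)$, with $\phi^{*}:=\phi$ when $\lnot\phi(C)=\emptyset$. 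I then claim that the partial type
\[
\pi(x):=\{x\in D\}\cup\{\phi^{*}(x)\mid \phi\in p\}
\]
over $A$ satisfies $\pi(M)=C\cup p(M)$. Each $\phi^{*}$ holds on all of $C$ (an element $\le c_\phi$ satisfies the right disjunct, an element $>c_\phi$ satisfies $\phi$) and on $p(M)$ (where $\phi$ holds), giving $\supseteq$; conversely if $d\models\pi$ and $d\notin C$ then $d>C$ (as $C$ is initial), so $d>c_\phi$ for every $\phi$, whence $d\models\phi$ for all $\phi\in p$, i.e.\ $d\models p$. Thus $P(M)$ is type-definable over $A$, irrespective of the global shape of $(D,<)$.

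\emph{Part (a), initial part.} Given $a\models p$ and $b\in D$ with $b<a$ I must show $b\in P(M)$. If $a\in C$, or $b\in C$, the claim is immediate from $C$ being an initial part; so assume $a\in p(M)$ and $b>C$, and suppose toward a contradiction that $b\notin p(M)$. The first useful fact is that $p(M)$ is closed under the successor $S$ and predecessor $S^{-1}$, which are relatively $A$-definable and hence by Fact \ref{Fact1}(b) definable: for $\phi\in p$ the formula $\phi(S^{\pm1}(x))$ again holds on a cofinite subset of $C\cong\boldsymbol\omega$, so it lies in $p$, and therefore $S^{\pm1}(a)\models p$ whenever defined; since $a>C$ both are defined. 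Consequently the $S$-chain $\dots<S^{-2}(a)<S^{-1}(a)<a<S(a)<\dots$ lies entirely in $p(M)$ and has no internal gaps. If $b\ge S^{-n}(a)$ for some $n$, then $b$ lies between two consecutive members of this chain and hence equals one of them, contradicting $b\notin p(M)$; so the only possibility is $C<b<S^{-n}(a)$ for all $n$, i.e.\ $b$ falls in a \emph{gap} between the top of $C$ and the descending chain. \textbf{Ruling out such a gap is the main obstacle of the whole lemma.} My intended route is to exploit the uniqueness of the $C$-type: I will show that a gap would let one manufacture, from the $A$-definable data $S,S^{-1}$ and a formula separating $b$ from $p$, a second accumulation point of $\{\tp(c/A)\mid c\in C\}$ in $S_1(A)$ distinct from $p$ --- contradicting Definition \ref{Definition_simple}(1). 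This is exactly the delicate point adapted from \cite{DI}, and I expect it to require a careful choice of the separating formula together with the shift-invariance supplied by $S$.

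\emph{Part (b).} Working now with the initial part $P(M)$ from (a), let $\mathrm{disc}(x)$ be the $L_A$-formula asserting that $x$ has an immediate successor or equals $\max D$, and an immediate predecessor or equals $\min D$, and set
\[
D_0:=\{x\in D\mid \forall y\,(y\le x\rightarrow \mathrm{disc}(y))\},
\]
the largest initial part of $D$ all of whose elements are points of discreteness; it is $A$-definable and an initial part by construction. Every element of $P(M)$ lies in $D_0$: on $C$ this is clear, and for $a\models p$ the formulas ``$x$ has an immediate successor'' and ``$x$ has an immediate predecessor or $x=\min D$'' hold cofinitely on $C$, hence belong to $p$, so $\mathrm{disc}(a)$ holds; and since $P(M)$ is an initial part every $y\le a$ already lies in $P(M)$ and is likewise a point of discreteness. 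Finally $(D_0,<)$ is discrete: if $x\in D_0$ is not the maximum of $D_0$, pick $x'\in D_0$ with $x'>x$; then $S(x)$ exists and $S(x)\le x'$, so $\{y\mid y\le S(x)\}\subseteq\{y\mid y\le x'\}\subseteq\mathrm{disc}(M)$ and $S(x)\in D_0$ is the immediate successor of $x$ in $D_0$, and symmetrically for predecessors. In particular $(p(M),<)\subseteq(D_0,<)$ is discrete.

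\emph{Parts (c) and (d).} For (c), let $X=\theta(M)\cap P(M)$ be nonempty with $\theta\in L_M$. If $X\cap C\ne\emptyset$ then, as $C\cong\boldsymbol\omega$ is well-ordered, $X\cap C$ has a least element $c_k$, and nothing of $X$ lies below $c_k$ (anything below $c_k$ is in $C$ by the initial-part property of (a)), so $\min X=c_k$. If instead $X\subseteq p(M)$, I will use (a) and (b): the absence of gaps means the lower cut of any nonempty relatively definable $X$ is \emph{principal}, for otherwise $X$ would be coinitial in $p(M)$ and, approaching the cut immediately above $C$, its closure would produce a point at that cut whose type is forced --- via type-definability of $P(M)$ over the countable $A$ --- to be a $C$-type other than $p$, again contradicting uniqueness; discreteness from (b) then upgrades ``bounded below in $p(M)$'' to ``has a minimum''. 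For (d), since $<$ and $<_1$ are both $A$-definable and agree on $C$, they agree on $C\times p(M)$ because $C<p(M)$ and $C<_1 p(M)$ by the argument of (a); on $p(M)\times p(M)$ I will show the two successor functions coincide, $S=S_1$ on $p(M)$: their graphs are relatively $A$-definable and agree cofinitely on $C$, so by the uniqueness of the $C$-type they agree on $p(M)$; as each order on the discrete initial part $P(M)$ is determined by its successor function together with the position of the $S$-chains over the common cut $C$, the two orders coincide on $P(M)$.
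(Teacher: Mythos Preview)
Your proof has genuine gaps in three of the four parts. In (a) you explicitly leave the ``main obstacle'' unproven, sketching only an intended route; but this step is in fact short and does not need the $S$-chain machinery at all. Given $b<a$ with $a\models p$, take any $\phi(x)\in\tp(b)$: then $\exists y\,(\phi(y)\land y<x)\in\tp(a)=p$, and since $p$ is a $C$-type this formula is satisfied by some $c\in C$, yielding $b'<c$ with $\phi(b')$; as $C$ is initial, $b'\in C$. Thus every formula in $\tp(b)$ is realised in $C$, so $\tp(b)$ is finitely satisfied in $C$, whence $b\in C$ (if algebraic) or $\tp(b)=p$ (by uniqueness of the $C$-type). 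No gap analysis, no second accumulation point is needed.

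Your arguments for (c) and (d) are incorrect. In (c), the assertion ``discreteness from (b) then upgrades `bounded below in $p(M)$' to `has a minimum'\,'' is false as an order-theoretic claim: in $\mathbb Z+\mathbb Z$ the second copy is bounded below and the order is discrete, yet it has no minimum; relative definability over parameters does not help without further input. The paper's argument is the real content of the lemma: for every $c\in C$ the interval $[c_0,c]$ is \emph{finite}, so the first-order statement ``for all $\bar y$, if $[c_0,x]\cap\theta(M,\bar y)\neq\emptyset$ then it has a minimum'' holds at every $c\in C$; since $p$ is a $C$-type it holds at every $a\models p$, and choosing $a$ large enough gives $\min X$. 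In (d), knowing $S=S_1$ on $p(M)$ does \emph{not} determine the order between distinct $S$-chains, so your final sentence does not conclude. The paper instead uses (c): if the orders disagree, then $D(a)=\{x\mid x<a\ \land\ x>_1 a\}$ is nonempty for $a\models p$, has a $<$-minimum $b\in p(M)$ by (c), and any $b'\in D(b)$ witnesses $b'\in D(a)$ with $b'<b$, a contradiction. You have misjudged where the work lies: (a) is easy, (c) is the substantive step, and (d) is a quick consequence of (c).
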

\begin{proof}To simplify the notation, assume $A=\emptyset$ and  write $x<y$ instead of $x\in D\land y\in D\land x<y$. 

(a) Since $C$ is an initial part of $(D,<)$,    $C<p(M)$ holds. 
To prove that $P(M)$ is an initial part of $D$, it suffices to show that $a\in p(M)$ and $b< a$ together imply $b\in P(M)$. Let $\phi(x)\in \tp(b)$. 
Then  $\exists y(\phi(y)\land y< x)\in\tp(a)=p$. Since $p$ is   a $C$-type, there are $c\in C$ and $b'\in D$ with $\models\phi(b')\land b'< c$.
Then $b'< c$ implies $b'\in C$, so   $\phi(x)$ is realized by $b'\in C$. We have just shown that every formula $\phi(x)\in \tp(b)$ is realized in $C$, so the type $\tp(b)$ is finitely satisfied in $C$; 
If $\tp(b)$ is algebraic  then $b\in C$; otherwise, $\tp (b)$ is a $C$-type, so $\tp(b)=p$ holds by the uniqueness of $p$.
In both cases we have $b\in P(M)$, so $P(M)$ is an initial part of $D$. Then it is is easy to see that $P(M)$ is defined by  $P(x):=\{\exists y(\theta(y) \land   x<y)\mid \theta(y)\in p(y)\}$. 

\smallskip
(b) Let $c_0=\min C$ and let $\phi(x)$ be a formula expressing that the interval $[c_0,x]$ is discretely ordered by $<$.  For all $c\in C$ the interval $[c_0,c]$ is finite, so $\models  \phi(c)$ holds. Since $p$ is a $C$-type, $\phi(x)\in p$. Let $D_0$ be the union of all intervals $[c_0,a]\subset D$ where $a\in\phi(M)$.  

\smallskip
(c) Let $\emptyset\neq D_1\subset P(M)$  where $D_1$ is relatively defined by $\theta(x,\bar b)$ ($\bar b\in M$). Pick $a\in p(M)$ with $[c_0,a]\cap D_1\neq\emptyset$.  Let $\phi(x,\bar y)$ be a formula expressing that the set
$[c_0,x]\cap\theta(M,\bar y)$ is nonempty and has minimum. 
In particular, $\models\phi(a,\bar b)$ holds. For all $c\in C$ the interval $[c_0,c]$ is finite, so $\models \forall \bar y \left(\exists z(c_0\leq z\leq c\land \theta(z,\bar y))\rightarrow \phi(c,\bar y)\right)$. Since $p$ is a $C$-type and $a\in p(M)$, 
\begin{center} 
$\models \forall \bar y \left(\exists z(c_0\leq z\leq a\land \theta(z,\bar y))\rightarrow \phi(a,\bar y)\right)$.  
\end{center}
In particular,   $\models  \exists z(c_0\leq z\leq a\land \theta(z,\bar b))\rightarrow \phi(a,\bar b)$.  Now  $[c_0,a]\cap D_1\neq \emptyset$ implies $\models  \exists z(c_0\leq z\leq a\land \theta(z,\bar b))$, so $\models\phi(a,\bar b)$ and $\min ([c_0,a]\cap D_1)$ exists. Hence $\min D_1$ exists, too.

\smallskip
(d) Suppose not. Then for all $a\in p(M)$ the set $D(a)$ defined by $x<a\land x>_1 a$ is nonempty so, by part (c), $b=\min_{<} D(a)$ is well-defined. By part (a), $C<_1p(M)$ implies $b\notin C$, so  $b<a$ implies $b\in p(M)$. In particular, $D(b)\neq\emptyset$ and for any $b'\in D(b)$ we have $b'<b<a\land b'>_1b>_1 a$, contradicting the minimality of $b$ in $D(a)$. 
\end{proof}

In Section 2 we will work  within the order $(P(M),<)$, which is an initial part of any  order $(D,<)$ that witnesses the simplicity of $p$. By Lemma \ref{Lemma_basic_simple}(d) the order $(P(M),<)$ is determined by   $(C,<)$, so we will simply say that the order $(C,<)$ witnesses the simplicity of the type $p$; $(D,<)$ will be any discrete order, whose initial part is $C$. Note that the  successor function  of $(P(M),<)$ agrees  with the restricted  successor function of $(D,<)$.

\begin{Lemma}\label{Lemma_simpleExist}
Assume $|S_2(T)|\leq\aleph_0$ and  let $(D,<)$ be an infinite discrete order definable in $M$. Then for all $c\in D$ there exists a simple type in $S_1(c)$.
In particular, if $I(\aleph_0,T)<2^{\aleph_0}$ and $T$ interprets an infinite  discrete order, then a simple type exists in $T(c)$.
\end{Lemma}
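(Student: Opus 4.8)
The plan is to produce, for a fixed $c\in D$, an explicit witness $(C,\prec)$ to the simplicity of a suitable $p\in S_1(c)$, working throughout over the parameter $A=\{c\}$.

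First I would reduce to a convenient orbit. Since $(D,<)$ is infinite and discrete, the $S$-block of $c$, namely $\{S^m(c)\mid m\in\mathbb Z \text{ and } S^m(c)\text{ is defined}\}$, is infinite: were it finite, its left endpoint would have no predecessor and its right endpoint no successor, hence these would be $\min D$ and $\max D$, forcing $D=[\min D,\max D]$ to be finite. So either the forward orbit $O=\{S^n(c)\mid n\ge 0\}$ or the backward orbit is infinite; passing to the reverse order if necessary, I may assume $O$ is infinite. Each $S^n(c)\in\dcl(c)$, the $S^n(c)$ are pairwise distinct, and distinct elements of $\dcl(c)$ realize distinct types over $c$; thus $\mathcal C:=\{\tp(S^n(c)/c)\mid n\ge0\}$ is an infinite set of isolated (algebraic) points of $S_1(c)$. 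A short computation with the finite intervals $[c,S^n(c)]$ shows that $O$ is an initial part of $(D_{\ge c},<)$ of order type $\boldsymbol\omega$, where $D_{\ge c}:=\{x\in D\mid x\ge c\}$.

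The heart of the argument is to isolate a single limit type, and this is where smallness enters. By hypothesis $|S_2(T)|\le\aleph_0$, so $S_1(c)$ is a countable, compact, metrizable space. Let $K$ be the set of accumulation points of $\mathcal C$, equivalently the set of $C$-types for $C=O$. As $\mathcal C$ is infinite, $K\neq\emptyset$, and $K$ is a nonempty, countable, compact subset of $S_1(c)$; since a nonempty perfect compact metric space is uncountable, $K$ has an isolated point $p$. Choose $\theta(x)\in p$ with $U_\theta\cap K=\{p\}$. Then $C:=\theta(O)=\{S^n(c)\mid \models\theta(S^n(c))\}$ is infinite (as $\theta\in p$ and $p$ accumulates $\mathcal C$), and every accumulation point of $\{\tp(a/c)\mid a\in C\}$ lies in $K\cap U_\theta=\{p\}$, using that $U_\theta$ is clopen. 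Hence $p$ is the unique $C$-type, which is condition (1) of Definition \ref{Definition_simple} and, by the remark following it, exactly says that for every $L_c$-formula $\phi$ precisely one of $\phi(C),\lnot\phi(C)$ is infinite.

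It remains to exhibit a $c$-definable order making $C$ an initial segment of type $\boldsymbol\omega$. For this I would re-sort $(D_{\ge c},<)$ so as to push the $\theta$-elements to the bottom: declare $x\prec y$ iff $(\theta(x)\land\lnot\theta(y))$ or $((\theta(x)\leftrightarrow\theta(y))\land x<y)$. This is a $c$-definable linear order; its $\prec$-initial part is $\theta(D_{\ge c})$, and the finiteness of $[c,S^n(c)]$ again forces any $z\in D_{\ge c}$ with $z\prec a\in C$ to lie in $C$, so $C$ is $\prec$-downward closed, while $\prec$ agrees with $<$ on $C$, giving order type $\boldsymbol\omega$. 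Thus $(C,\prec)$ witnesses that $p$ is simple. For the final clause, if $I(\aleph_0,T)<2^{\aleph_0}$ then $T$ is small, so $|S_2(T)|\le\aleph_0$; after adding a sort, if needed, to render the interpreted order definable in $M$, the first part applied over $c$ yields a simple type in $S_1(c)$. I expect the only genuinely delicate step to be the Cantor--Bendixson extraction of the isolated limit $p$: it is precisely the countability of $S_1(c)$ that prevents $K$ from being perfect and thereby supplies a clopen neighborhood isolating a unique $C$-type, whereas the re-sorting securing condition (2) is routine.
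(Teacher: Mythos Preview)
Your proof is correct and follows essentially the same approach as the paper: pass to the forward $S$-orbit of $c$, use countability of $S_1(c)$ to find an isolated accumulation point $p$ of the orbit types via a formula $\theta$, and set $C=\theta(O)$. The only cosmetic difference is in securing condition (2): the paper simply restricts the original order to the $c$-definable set $\psi(D)=\{x\in D\mid \theta(x)\land c\le x\}$, whereas you re-sort $D_{\ge c}$ by $\prec$ to push $\theta$-elements below $\lnot\theta$-elements; both work, but the restriction is slightly more direct.
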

\begin{proof}
By reversing the order, if necessary, we may assume that the set $C_0=\{S^n(c)\mid n\in \omega\}$ is infinite.  Let $\mathcal C=\acc \{\tp(S^n(c)/c)\mid n\in\omega \}$
be the set of all $C_0$-types in $S_1(c)$. Since $|S_2(T)|\leq\aleph_0$, $\mathcal C$ is  countable so, being closed, contains an  isolated point $p\in \mathcal C$. Choose a  
$\phi(x)\in p(x)$ isolating  $p$ within $\mathcal C$ and let $C=\phi(C_0)$; then $p$ is a unique $C$-type in $S_1(c)$.   If $\psi(x):=\phi(x)\land x\in D \land c\leq x$, then $C$ and $(\psi(D),<)$  witness that $p$ is a simple type.
\end{proof}

\noindent
{\it First proof of Theorem 1.} \ The notion of a type directed by constants is introduced in \cite{T}.  A type $p\in S_1(T)$ is $(C,\leq)$-directed  if:
 
 (1)  $\leq$ is a definable partial order on $M$; 
 
 (2)  $C\subseteq \dcl(0)$  is an initial part of $(M,\leq)$;

  (3) $p=\{\phi(x)\mid \mbox{$\phi(C)$ is co-finite in $C$}\} $.\\
If $T$ is small and interprets an infinite discrete order, then by Lemma \ref{Lemma_simpleExist}, there exists  a simple type $p\in S_1(T(c))$. Clearly, $p$ is directed  by constants (in $T(c)$), so by Corollary 1 of \cite{T}  $I(\aleph_0,T(c))=2^{\aleph_0}$ holds; so does \  $I(\aleph_0,T)=2^{\aleph_0}$.  \qed

\medskip
Before sketching the second proof of Theorem 1, let us make precise  what a relativization   $T_{\theta}$ of $T$ is.
Choose a language $L^*$ containing for each $L$-formula $\phi(\bar x)$ an $|\bar x|$-ary relation symbol $R_{\phi}$. In our model $M$, the predicate $R_{\phi}$ is interpreted naturally as $\phi(M)$. Let $M^*$ be the induced  $L^*$-structure and $T^*$ its theory. Note that $M$ and $M^*$ are interdefinable (have the same  definable sets). Moreover, any model of $T$ is interdefinable with  a model of $T^*$ and vice versa.  In particular, $I(\aleph_0,T)=I(\aleph_0,T^*)$. 
Let $\theta(x)$  be an $L$-formula, let $\theta(M)^* $ be a $L^*$-substructure of $M^*$ and let $T_{\theta}$ be its complete $L^*$-theory. It is a rather straightforward application of the omitting types theorem, noticed by Vaught, that for all countable  $N\models T_{\theta}$ there is a countable $M_N\models T$ with $\theta(M_N)^*\cong N$. Hence $I(\aleph_0,T_{\theta})=2^{\aleph_0}$ implies $I(\aleph_0,T)=2^{\aleph_0}$.

\medskip 
\noindent{\it Second proof of Theorem 1.} 
Suppose that $T$ is small and   interprets an infinite discrete order. Let $p\in S_1(c)$ be a simple type, as witnessed by $(C,<)$. Put $c_0=\min C$, fix $a\in p(M)$ and let $D_0=[c_0,a]$.
Then $D_0$ is an $ac$-definable set, by $\theta(x)$ say. By Lemma \ref{Lemma_basic_simple}(c) every $L_M$-definable subset of $D_0$ has minimum, so the relativization $T_{\theta}$ has definable   Skolem functions $f_{\phi(x,\bar y)}(\bar y)=\min\phi(M_{\theta},\bar y)$ (defined on $\exists x\phi (x,\bar y) (M)$). By Shelah's Theorem 3.8 from \cite{Sh}  $I(\aleph_0,T_{\theta})=2^{\aleph_0}$ holds. Hence $I(\aleph_0,T)=2^{\aleph_0}$ holds, too.\qed

\section{Proof of Theorem 2}

In this section we will simplify arguments from Section 2 of \cite{DI} and use them to prove Theorem 2. The crux of the proof is in the next lemma (Lemma 2.6 in \cite{DI}).

\begin{Lemma}\label{Lemma_f_is_Sn}
Suppose that  $p\in S_1(T)$ is a simple type, as witnessed by $(C,<)$ and $(D,<)$.  If $f:p(M)\to p(M)$ is a relatively definable function, then $f=S^k$ for some $k\in\mathbb Z$, where $S$ is the successor function on $(p(M),<)$.
\end{Lemma}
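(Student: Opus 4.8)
The plan is to exploit two features of a simple type. First, $p$ is a complete type over $\emptyset$, so every relatively $\emptyset$-definable subset of $p(M)$ is either empty or all of $p(M)$. Second, $p$ is a $C$-type: a formula lies in $p$ exactly when it holds on cofinitely many elements of $C$, which lets me transfer statements between $p(M)$ and the genuinely well-ordered chain $(C,<)\cong\boldsymbol\omega$. For $x\in p(M)$ let $d(x)\in\mathbb Z\cup\{\pm\infty\}$ be the signed number of successor steps from $x$ to $f(x)$; proving $f=S^k$ amounts to proving that $d$ is a single finite constant. Since the successor of $(p(M),<)$ is the restriction of that of $(D,<)$ and $p(M)$ has no endpoints, each power $S^k$ is a relatively definable total function, so each $A_k=\{x\in p(M)\mid f(x)=S^k(x)\}$ and each $\{x\mid f(x)>S^N(x)\}$ is relatively $\emptyset$-definable, hence all-or-nothing. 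It follows that exactly one of the following holds on all of $p(M)$: $d\equiv k$ for a single $k\in\mathbb Z$, or $d\equiv+\infty$, or $d\equiv-\infty$. In the first case $f=S^k$ and we are done, so the whole problem reduces to excluding infinite displacement.

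The main obstacle is excluding $d\equiv\pm\infty$, i.e.\ the possibility that $f$ sends every $x$ into a strictly higher (or lower) $S$-block. I expect the well-ordering property of Lemma~\ref{Lemma_basic_simple}(c) not to suffice for this by itself: any nonempty relatively definable $X\subseteq P(M)$ that meets $p(M)$ must, by uniqueness of the $C$-type, already meet $C$ cofinitely (otherwise its defining formula would be outside $p$), so every such minimum lies in $C$, where infinite displacement is perfectly consistent. Instead I would argue by homogeneity. Suppose $b=f(a)$ lies in an $S$-block strictly above that of $a$. Then I claim $\tp(b/a)$ is non-algebraic: there is $b'\neq b$, in yet another block infinitely above $a$, with $\tp(b'/a)=\tp(b/a)$, because the realizations of $p$ lying strictly above the block of $a$ carry no element of $\dcl(a)$ — the block structure above $a$ is homogeneous, so no formula can single out $b$ among its block-translates. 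This contradicts $b=f(a)\in\dcl(a)$. The case $d\equiv-\infty$ is symmetric (alternatively, replace $f$ by the relatively definable generalized inverse $h(y)=\min\{x\mid f(x)\ge y\}$, whose displacement is the negative of that of $f$). Once infinite displacement is ruled out, $d$ is a finite constant $k$ and $f=S^k$.

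The step I expect to be genuinely delicate is the non-algebraicity claim for $\tp(b/a)$: I would obtain it by relativizing the simple-type picture over $a$, observing that after naming $a$ the order $(P(M),<)$ still admits a $C$-type description on the realizations lying above the block of $a$, so that no single formula can pin $b$ down. Reconciling this with the paper's use of Lemma~\ref{Lemma_basic_simple}(c) as the crux — i.e.\ finding a purely order-theoretic, minimality-based exclusion of block-jumps — is the part I am least certain of and would attempt first. Finally, if $f$ is relatively definable with parameters $\bar b$, the all-or-nothing steps above are no longer available; there I would redo each reduction using Lemma~\ref{Lemma_basic_simple}(c) directly on the relevant relatively $\bar b$-definable subsets of $P(M)$, keeping the homogeneity argument, now carried out over $a\bar b$, as the core.
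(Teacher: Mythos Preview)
Your three-case reduction (constant finite displacement versus $d\equiv\pm\infty$) is correct and clean. The gap is in the exclusion of infinite displacement: your homogeneity argument is circular. You assert that $b=f(a)$ cannot lie in $\dcl(a)$ because ``the realizations of $p$ lying strictly above the block of $a$ carry no element of $\dcl(a)$,'' but $b=f(a)\in\dcl(a)$ is precisely such an element, so that claim is exactly the thing to be proved. The proposed repair, ``relativizing the simple-type picture over $a$,'' does not work as stated: there is no reason the $\{S^n(a)\mid n\in\omega\}$-type in $S_1(a)$ should be unique, and uniqueness of the $C$-type over $\emptyset$ says nothing about types over $a$ of elements \emph{above} $a$. (What the paper eventually proves in Lemmas~\ref{Lemma_x<cl(y)} and~\ref{Lemma_cl_n} is uniqueness for elements \emph{below} $\cl(a)$ --- and those lemmas use Lemma~\ref{Lemma_f_is_Sn}, so invoking them here would be circular.)

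You are also undervaluing Lemma~\ref{Lemma_basic_simple}(c). Your observation that a relatively $\emptyset$-definable subset of $P(M)$ meeting $p(M)$ meets $C$ cofinitely, so its minimum lies in $C$, is correct --- but Lemma~\ref{Lemma_basic_simple}(c) covers relatively $M$-definable sets, whose minima may land in $p(M)$, and this is exactly the leverage the paper uses. It applies the minimum to the $a$-definable set $D(a)=\{x\mid \hat f(a)\le \hat f(x)<x<a\}$ to force $f$ to be order-preserving; together with surjectivity (saturation) and injectivity (another minimum argument) this makes $f$ an order-automorphism of $(p(M),<)$. Extending to a definable increasing $\hat f$, one gets $\hat f(C)\subseteq C$ (since $C<p(M)$ and $f$ is onto $p(M)$), and a compactness argument then shows $\hat f(c_{n+1})=S(\hat f(c_n))$ for all large $n$, so $\hat f(c_n)=S^k(c_n)$ cofinitely in $C$ for a fixed $k$; this transfers to $p$ by the $C$-type property. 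In short, infinite displacement is ruled out not by homogeneity over $a$ but by first proving $f$ is increasing and then reading off the constant displacement directly on $C$.
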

\begin{proof}Let $c_0=\min C$, $c_n=S^n(c_0)$ for $n\in\mathbb N$, and  suppose that $(D,<)$ is discrete.   
First, we will prove that $f$ is an automorphism of $(p(M),<)$. Choose a definable function $\hat f:D_1\to D_2$ extending $f$. Since $p(x)\cup\{\hat f(x)=a\}$ is consistent for some (equivalently all) $a\in p(M)$ and $M$ is $\aleph_0$-saturated, $f$ is surjective.
To verify the injectivity of $f$, let us consider the set $\hat f^{-1}(\{f(a)\})\cap [c_0,a]\subset p(M)$. It is an $a$-definable subset of $p(M)$ so, by Lemma \ref{Lemma_basic_simple}(c), has minimum, say $a'$; clearly,   $a'=\min(\hat f^{-1}(\{f(a')\})\cap [c_0,a'])$. Since  $a'\in p(M)$, the formula  $x=\min(\hat f^{-1}(\{f(x)\})\cap [c_0,x])$ belongs to $p(x)$; it follows that $f$ is injective. 
In particular, the inverse function $f^{-1}$ is relatively definable, so after possibly shrinking $D_1$ and $D_2$, we will assume that $\hat f$ is a bijection.
It remains to show that $f$ (equivalently $f^{-1}$) is order-preserving. 
Note that at least one of the formulae $\hat f(x)\leq x$ and $\hat f^{-1}(x)\leq x$ belongs to $p(x)$. We will from now on assume $(\hat f(x)\leq x)\in p(x)$, the proof in the other case is similar.
Suppose, by way of contradiction, that $f$ is not order-preserving. Then the set $D(a)$ defined by $\hat f(a)\leq \hat f(x)<x<a$ is nonempty (for all $a\in p(M)$). Observe that  $D(a)\subset [f(a),a]\subset p(M)$,  so  $\emptyset\neq D(b)\subset p(M)$ holds for all $b\in D(a)$. Choose $b'\in D(b)$. Then:  
$$\hat f(a)\leq \hat f(b)\leq \hat f(b')<b'<b<a$$ witnesses non-minimality of $b$ in $D(a)$. Hence no element of $D(a)$ is minimal, contradicting Lemma \ref{Lemma_basic_simple}(c). Therefore, $f$ is an automorphism of $(p(M),<)$. 

By compactness, we can modify $\hat f$ so that it is strictly increasing. In fact, in what follows we will view $\hat f$ as a definable, increasing, partial function  $\hat f\subset D\times D$. Since $\hat f$ is increasing and $\hat f(p(M))=p(M)$, $\hat f(C)\subseteq C$.

Let $\phi(x,y)$ be a formula implying $x<y$ and expressing  ``$[x,y]\subset \dom (\hat f)$ and $\hat f\restriction[x,y]$ is an order-isomorphism of $(D,<)$-intervals $[x,y]$ and $[\hat f(x),\hat f(y)]$''. Then:
$$ p(x)\cup p(y)\cup\{x<y\}\vdash \phi(x,y).$$
By compactness, there is a formula $\theta(x)\in p(x)$   such that:
\begin{equation}
\models  \forall x\forall y  (\theta(x)\land \theta(y)\land x<y\rightarrow \phi(x,y)).
\end{equation}
Since $p$ is a simple type and $\theta(x)\in p$, the set $\lnot\theta(C)$ is finite, so there is a $n_0\in \omega$ such that $\{c_i\mid i\geq n_0\}\subset \theta(C)$. For all $n\geq n_0$, by (1),  $f$ maps the (two-element) interval $[c_n,c_{n+1}]$ onto the interval $[\hat f(c_n),\hat f(c_{n+1})]$, so  
$\hat f(c_n)$ and $\hat f(c_{n+1})$ are consecutive elements of $D$, i.e $\hat f(S(c_n))=S(\hat f(c_n))$ (where $S$ is the succesor function on $(D,<)$). By induction we easily get  $\hat f(S^m(c_{n_0}))=S^m(\hat f(c_{n_0}))$ for all $m\in \omega$. Since $\hat f(c_{n_0})\in C$, there is a $k\in \mathbb Z$ with $\hat f(c_{n_0})=S^k(c_{n_0})$. Then for all $n\geq n_0$: 
$$\hat f(c_n)=\hat f(S^{n-n_0}(c_{n_0}))=    S^{n-n_0}(\hat f(c_{n_0}))=S^{n-n_0+k}(c_{n_0})=S^k(S^{n-n_0}(c_{n_0}))=S^k(c_n).$$ 
Hence  $(\hat f(x)=S^k(x))\in \tp(c_n)$ for  all $n\geq n_0$.  Since $p$ is a $C$-type, $(\hat f(x)=S^k(x))\in p(x)$.
\end{proof}
 
\begin{Lemma}\label{Lemma_x<cl(y)}
Suppose that the type $p\in S_1(T)$ is simple, as witnessed by $(C,<)$  and $(D,<)$. Let $\cl(x)=\{S^n(x)\mid n\in\mathbb Z\}$. Then   $\Pi(x,y)=p(x)\cup p(y)\cup x<\cl(y)$ has a unique completion in $S_2(T)$. 
\end{Lemma}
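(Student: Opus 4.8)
The plan is to show that $\Pi$ generates a complete type by proving that no $L$-formula separates two realizations of $\Pi$. First I would reduce the statement to a one-variable problem over a parameter. Given two realizations $(a_1,b_1)$ and $(a_2,b_2)$ of $\Pi$, since $a_1,a_2\models p$ and $p$ is complete there is $\sigma\in\Aut(M)$ with $\sigma(a_1)=a_2$; replacing $(a_1,b_1)$ by $(a_2,\sigma b_1)$ I may assume $a_1=a_2=a$. Thus it suffices to prove that the partial type $q(y):=p(y)\cup\{a<\cl(y)\}$ over $a$ is complete, i.e. that $\phi(a,y)$ is decided by $q$ for every $L$-formula $\phi(x,y)$; equivalently, that $\phi(a,\cdot)$ is constant on the region $Z_a:=\{y\in p(M)\mid a<\cl(y)\}$. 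Note that $Z_a$ is an upward-closed union of whole $<$-galaxies lying strictly above the galaxy $\cl(a)$, and that it is type-definable over $a$ but (this is the crux) not definable: there is no definable $\omega$-chain cofinally approaching $\cl(a)$ from above, so $q$ is \emph{not} a simple type over $a$ and the uniqueness built into Definition~\ref{Definition_simple} cannot be quoted directly.

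Suppose toward a contradiction that $\phi(a,\cdot)$ is non-constant on $Z_a$, say $\phi(a,b)$ and $\neg\phi(a,b')$ with $b,b'\in Z_a$. Using completeness of $p$ once more I would move $b'$ onto $b$, reducing to the symmetric situation of two elements $a,a'\models p$ lying below $\cl(e)$ (for a fixed $e\models p$) with $\phi(a',e)\wedge\neg\phi(a,e)$. There are then two cases according to whether the separation can be realized within a single galaxy ($a'=S^k(a)$) or only across distinct galaxies. In both cases the plan is to feed the data into the rigidity Lemma~\ref{Lemma_f_is_Sn}. Concretely, I would use the minimum operator supplied by Lemma~\ref{Lemma_basic_simple}(c) to turn the separating formula into a canonical ``first flip'' or ``first witness'' point, thereby producing a relatively $\emptyset$-definable partial function $f$ on $p(M)$; the threshold condition $x<\cl(\cdot)$, being only type-definable, would be replaced by its definable approximations $x<S^{-N}(\cdot)$ and handled by compactness, using that membership in $p$ means co-finiteness on $C$. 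Lemma~\ref{Lemma_f_is_Sn} then forces $f$ to coincide with some power $S^k$ on $p(M)$. But a power of the successor function is an order-isomorphism preserving each galaxy setwise and acting by a fixed finite shift, so it cannot record a genuine sign change of $\phi(a,\cdot)$ either across two distinct galaxies of $Z_a$ or at an internal flip inside one galaxy; this contradiction shows that $\phi(a,\cdot)$ is constant on $Z_a$, hence $q$ is complete and $\Pi$ has a unique completion.

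The hard part, and the step I expect to be the main obstacle, is precisely the elimination of parameters: rigidity (Lemma~\ref{Lemma_f_is_Sn}) applies only to honestly parameter-free relatively definable self-maps of $p(M)$, whereas the separating data $\phi(a,\cdot)$ carries the parameter $a$ and is supported on the non-definable region $Z_a$. Converting this into a legitimate parameter-free function requires (i) absorbing the second point by quantification together with the minimum operator of Lemma~\ref{Lemma_basic_simple}(c), and (ii) a compactness argument replacing the type-definable barrier $\cl(y)$ by finite successor-approximations and invoking the characterization of $p$ as the unique $C$-type. Once the function is manufactured, its forced equality with some $S^k$ is immediate from rigidity, and the order-theoretic incompatibility of a fixed shift with a sign change finishes the proof.
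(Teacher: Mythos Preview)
Your proposal correctly identifies the two tools that drive the argument---the minimum operator of Lemma~\ref{Lemma_basic_simple}(c) and the rigidity Lemma~\ref{Lemma_f_is_Sn}---and your reduction to a one-variable problem over a fixed realization of $p$ is exactly what the paper does. However, the step you yourself flag as the obstacle, namely manufacturing a \emph{parameter-free} relatively definable self-map of $p(M)$ from the separating formula, is not actually carried out in your sketch, and your suggested route through a ``first flip'' runs into trouble: the minimum of the flip locus could land in $C$ rather than in $p(M)$, and over the parameter $e$ you have no a priori control on how $\phi(\cdot,e)$ behaves on $C$ (there is no unique $C$-type in $S_1(e)$ yet---that is precisely what is being proved).

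The paper avoids this by inserting an intermediate claim that does all the work: for every $(a,b)\models\Pi$, the type $\tp(a/b)$ is a $C$-type. The point is that the \emph{negation} of this claim hands you a formula $\phi(x,b)\in\tp(a/b)$ with $\phi(C,b)=\emptyset$, and then $f(y):=\min\phi(M,y)$ is automatically parameter-free (the parameter $b$ has become the argument), lands in $p(M)$ because $\phi(C,y)=\emptyset$, and lies below $\cl(y)$; rigidity now gives the contradiction $f(b)\in\cl(b)$. No compactness juggling or case split on galaxies is needed.

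Once the $C$-type claim is in hand, the completion argument is a short convexity computation rather than another appeal to rigidity: for $(a,b_1),(a,b_2)\models\Pi$ with $b_1\le b_2$ and $\psi(a,b_2)$, one first observes (using the claim and $\tp(b_2)=\tp(S^m(b_2))$) that the whole galaxy $\cl(b_2)$ lies in $\psi(a,M)$, then lets $b$ be the left end of the maximal convex piece of $\psi(a,M)\cap[a,b_2]$ containing $b_2$. If $b>a$ then $\psi(a,b)\wedge\neg\psi(a,S^{-1}(b))$; since an automorphism taking $b$ to $S^{-1}(b)$ fixes $C$ pointwise, no $c\in C$ satisfies this conjunction, so $\tp(a/b)$ is not a $C$-type, forcing $a\in\cl(b)$ and hence $b<b_1$. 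Either way $b_1\in\psi(a,M)$. This replaces your vaguer ``compactness replacing the type-definable barrier'' step with a direct argument.
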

\begin{proof}First we {\it claim} that $\tp(a/b)$ is a $C$-type for all $(a,b)\models \Pi$. Suppose not. That is, there are $a,b\in p(M)$ with  $a<\cl(b)$, and there is a formula   $\phi(x,b)\in \tp(a/b)$ with $\phi(M,b)\cap C=\emptyset$.  Since $a\in \phi(M,b)$, we may apply Lemma \ref{Lemma_basic_simple}(c): let $a'=\min \phi(M,b)$. Then $a'\in p(M)$ and  $a'\leq a<\cl(b)$. Hence $y=\min\phi(M,x)$ relatively defines a function  $f:p(M)\to p(M)$ satisfying $f(b)=a'$. By Lemma 
\ref{Lemma_f_is_Sn},  for some integer $k$ we have $a'=S^k(b)\in\cl(b)$, contradicting $a'<\cl(b)$. Therefore, $\tp(a/b)$ is a $C$-type. 

Since $M$ is $\aleph_0$-saturated, to prove that $\Pi$ has a unique completion, it suffices to show that $\tp(a,b_1)=\tp(a,b_2)$ holds for any pair $(a,b_1),(a,b_2)$  of realizations of $\Pi$.  Without loss of generality, we will  assume $\models a<b_1\leq b_2$ and  prove that $\psi(x,b_2)\in \tp(a/b_2)$ implies $\psi(x,b_1)\in\tp(a/b_1)$. Observe that, by the above claim, both $\tp(a/b_1)$ and $\tp(a/b_2)$ are $C$-types.
Also observe that for all $m\in\mathbb Z$ the element   $S^m(b_2)$ realizes $p$, so $\models \psi(c,b_2)\leftrightarrow \psi(c,S^m(b_2))$ holds for all $c\in C$. Since $\tp(a/b_2)$ is a $C$-type  $\models \psi(a,b_2)\leftrightarrow \psi(a,S^m(b_2))$ holds, so $\psi(x,b_2)\in\tp(a/b_2)$ implies $S^m(b_2)\in\psi(a,M)$ and $\cl(b_2)\subseteq \psi(a,M)$. 
Define: 
$$D_0:=\bigcup\{[x,b_2]\mid a\leq x \mbox{ and } [x,b_2]\subseteq \psi(a,M)\}.$$ 
Clearly, $D_0$ is a convex, $ab_2$-definable subset of $\psi(a,M)$. By Lemma \ref{Lemma_basic_simple}(c), there exists $b=\min D_0$. If  $b=a$,  then we are done: $b_1\in [a,b_2]\subseteq\psi(a,M)$ implies $\psi(x,b_1)\in\tp(a/b_1)$. So suppose $a<b$. 
Then $S^{-1}(b)\notin D_0$ implies  $\models  \lnot\psi(a,S^{-1}(b))\land \psi(a,b)$. Since $\tp(S^{-1}(b))=\tp(b)$, the formula $\lnot\psi(x,S^{-1}(b))\land \psi(x,b)$ is not satisfied by an element of $C$, so $\tp(a/b)$ is not a $C$-type. 
Then  the {\it claim} implies $a\in \cl(b)$ which combined with $a<\cl(b_1)$ implies $b<b_1\leq b_2$. Since $D_0$ is convex and contains $b,b_2$, we conclude $b_1\in D_0$ and, in particular, $\psi(x,b_1)\in \tp(a/b_1)$, as desired. 
\end{proof}

\begin{Lemma}\label{Lemma_cl_n}
Suppose that $p\in S_1(T)$ is a simple type, as witnessed by $(C,<)$ and $(D,<)$.
For all finite nonempty sets $A\subset p(M)$ the partial type $\Pi_A(x)=p(x)\cup x<\cl(\min A)$ has a unique completion $p_A(x)\in S_1(A)$.  The type $p_A(x)$ is simple, as witnessed by $(C,<)$ and $(D,<)$.
\end{Lemma}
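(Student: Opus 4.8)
The plan is to induct on $|A|$, reducing each step to the two-variable Lemma \ref{Lemma_x<cl(y)} applied over a larger base. First I would make two reductions. Any two elements of $p(M)$ in the same orbit $\cl(\cdot)$ are interdefinable over $\emptyset$ (one is an $S^m$-image of the other, and the order $<$, hence $S$ and each $S^m$, is $\emptyset$-definable); deleting from $A$ all but one representative of each orbit therefore changes neither $\dcl(A)$, nor $\min A$, nor $\Pi_A$, so I may assume $A=\{a_1<\dots<a_n\}$ with pairwise distinct orbits $\cl(a_1)<\dots<\cl(a_n)$ and $a_1=\min A$. Since these orbits are convex blocks, linearly ordered by Lemma \ref{Lemma_basic_simple}(b), the hypothesis $a<\cl(\min A)$ forces $a<\cl(a_i)$ for all $i$; in particular every realization $a$ of $\Pi_A$ has $(a,a_i)\models\Pi$ in the sense of Lemma \ref{Lemma_x<cl(y)}, and $\Pi_A$ is consistent (by $\aleph_0$-saturation its realizability reduces to that of $\Pi(x,a_1)$).

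The base case $n=1$ is Lemma \ref{Lemma_x<cl(y)}: it gives a unique completion of $\Pi(x,a_1)$, so $\tp(a/a_1)$ is determined and, by the Claim inside its proof, is a $C$-type. For the inductive step I would delete the \emph{least} element, setting $A'=\{a_2,\dots,a_n\}$; then $\min A'=a_2$, and since $\cl(a_1)<\cl(a_2)$ every realization of $\Pi_A$ realizes $\Pi_{A'}$. By the induction hypothesis $p_{A'}$ is simple over $A'$, witnessed by the same $(C,<)$ and $(D,<)$, and every realization of $\Pi_A$ realizes $p_{A'}$. The key point is that the deleted element $a_1$ now lies \emph{below} the orbits of $A'$: from $\cl(a_1)<\cl(a_2)=\cl(\min A')$ we get $a_1<\cl(\min A')$, so $a_1\models\Pi_{A'}$ and hence $a_1\models p_{A'}$. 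Thus, working over the base $A'$, both $a$ and $a_1$ realize the simple type $p_{A'}$ and $a<\cl(a_1)$, i.e.\ $(a,a_1)$ realizes the partial type of Lemma \ref{Lemma_x<cl(y)} for $p_{A'}$. Applying that lemma relativized to $T(A')$ --- legitimate since $p_{A'}$ is simple --- gives a unique completion of $p_{A'}(x)\cup p_{A'}(y)\cup\{x<\cl(y)\}$ in $S_2(A')$; its slice at the fixed parameter $y=a_1$ shows that $\tp(a/A)$ is determined and is a $C$-type. Hence $\Pi_A$ has a unique completion $p_A$, and $p_A$ is a $C$-type.

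It remains to upgrade this to full simplicity: that $p_A$ is the \emph{unique} $C$-type in $S_1(A)$, with $(C,<),(D,<)$ as witnesses (clause (2) of Definition \ref{Definition_simple} being inherited). Here I would show that every $C$-type $q\in S_1(A)$ contains $\Pi_A$ and then quote uniqueness of the completion. Restricting $q$ to $\emptyset$ is a $C$-type over $\emptyset$, hence equals $p$, so $p\subseteq q$; and because $\cl(\min A)\subseteq p(M)$ lies entirely above $C$, each formula $x<S^m(\min A)$ holds of every element of $C$, so it is cofinite on $C$ and belongs to $q$. Thus $\Pi_A\subseteq q$, and therefore $q=p_A$, proving $p_A$ is the unique $C$-type.

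The step I expect to be the crux --- and which the whole organization is designed to sidestep --- is showing $\tp(a/A)$ is a $C$-type. Attacking this head-on, as in the Claim of Lemma \ref{Lemma_x<cl(y)}, produces a \emph{multivariable} relatively definable function $g(\bar a)=\min(\psi(M,\bar a)\cap P(M))$, to which the one-variable Lemma \ref{Lemma_f_is_Sn} gives no direct access. Deleting the minimum rather than any other element is precisely the maneuver that avoids this: it is the unique choice for which the removed parameter falls below the orbits of the survivors, so that it itself realizes $p_{A'}$ and the genuinely two-variable Lemma \ref{Lemma_x<cl(y)} becomes applicable over the base $A'$. I would finally check non-circularity: the relativized application of Lemma \ref{Lemma_x<cl(y)} is to the simple type $p_{A'}$, whose simplicity is supplied by the induction hypothesis, not by the instance being proved.
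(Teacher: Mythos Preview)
Your proof is correct and follows essentially the same route as the paper. The only organizational difference is that you absorb the paper's Case~1 (two parameters in the same orbit) into a preliminary reduction to pairwise-distinct orbits, whereas the paper handles it at each inductive step; thereafter your inductive step---delete the minimum element, observe it realizes $p_{A'}$, and apply Lemma~\ref{Lemma_x<cl(y)} to the simple type $p_{A'}$ over the base $A'$---and your simplicity argument match the paper's Case~2 and first paragraph verbatim. (One cosmetic slip: after deleting representatives, $\min A$ itself can change, though $\cl(\min A)$ and hence $\Pi_A$ cannot; this does not affect the argument.)
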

\begin{proof}First we show that the second claim follows from the first one: Note that every $C$-type from $S_1(A)$ contains the type $\Pi_A(x)$, because every formula from $\Pi_A(x)$ is satisfied by all but finitely many elements of $C$. Therefore, if $p_A(x)$ is the unique completion of $\Pi_A(x)$, then it is also the unique $C$-type in $S_1(A)$;  $p_A$ is a simple type, as witnessed by $(C,<)$.

The first claim we prove by induction on $|A|$.  Lemma \ref{Lemma_x<cl(y)} proves the case $|A|=1$. 
Suppose that the lemma holds for all simple types (in any theory) and all subsets of $p(M)$  of size at most $n$. Let $A=\{a_1,...a_{n+1}\}$ where $a_{n+1}\leq a_n\leq ...\leq a_{1}$. Then $\Pi_A(x)=p(x)\cup\{x<a_{n+1}\}$. 

Case 1. \ $a_{n+1}\in \cl(a_n)$. \ In this case $\cl(a_n)=\cl(a_{n+1})$, so the types $\Pi_{\{a_1...a_n\}}(x)=p(x)\cup x<\cl(a_n)$ and $\Pi_{A}(x)=p(x)\cup x<\cl(a_{n+1})$ 
are equal and, by the induction hypothesis, the type $\Pi_A(x)$ has a unique completion $p_{\{a_1...a_n\}}\in S_1(a_1,...,a_n)$. Since 
$a_{n+1}\in \dcl(a_n)$, the type $p_{\{a_1...a_n\}}$ has a unique extension $p_A\in S_1(A)$; $p_A$  is a unique extension of $\Pi_A(x)$, as well.   

Case 2. \ $a_{n+1}<\cl(a_n)$. \ Let $b,b'\models \Pi_A$ and we will prove $\tp(b/A)=\tp(b'/A)$. By the induction hypothesis, the type $p(x)\cup x<\cl(a_n)$ has a unique extension $p_{\{a_1,...,a_n\}}(x)\in S_1(a_1,...,a_n)$. By the first paragraph of the proof the type  $p_{\{a_1,...,a_n\}} $ is  simple, as  witnessed by $(C,<)$. Note that $b,b',a_{n+1}$ realize $\Pi_{\{a_1,...,a_n\}}(x)$, so by the induction hypothesis, they realize $p_{\{a_1.,..,a_n\}}$, too. By Lemma \ref{Lemma_x<cl(y)}  the type $p_{\{a_1,...,a_n\}}(x)\cup p_{\{a_1,...,a_n\}}(y) \cup x<\cl(y)$ has a unique completion   $q\in S_2(a_1,...,a_n)$. Note that $(b,a_{n+1}), (b',a_{n+1})$ realize $q$, so  $\tp(b/A)=\tp(b'/A)$. Therefore, the type $\tp(b/A)$ is a  unique completion of $\Pi_A(x)$, as desired.
\end{proof}
As an immediate corollary we have the following.
\begin{Corollary}\label{Cor_pntypes}
Suppose that $p\in S_1(T)$ is a simple type, as witnessed by $(C,<)$. Then for all $n\geq 1$ the type 
 \ $\Pi^n(\bar x):=\bigcup_{i=0}^n p(x_i)\cup \bigcup_{i=0}^{n-1} x_{i+1}<\cl(x_i)$ \
has a unique completion in $S_{n+1}(T)$. 
\end{Corollary}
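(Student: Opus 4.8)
The plan is to prove both consistency and uniqueness of the completion by induction on $n$, with the inductive step routed entirely through Lemma \ref{Lemma_cl_n} and the homogeneity of the monster model $M$. The base case $n=1$ is precisely Lemma \ref{Lemma_x<cl(y)}, which gives a unique completion of $p(x_0)\cup p(x_1)\cup x_1<\cl(x_0)$ in $S_2(T)$. Throughout I use that $\cl$ and $<$ are $\emptyset$-definable (the successor function $S$ is $\emptyset$-definable), so both are preserved by any $\sigma\in\Aut(M)$.

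For \emph{consistency} I would build a realization one coordinate at a time. Start with any $a_0\models p$; having chosen $a_0,\dots,a_i\models\Pi^i$, observe that the conditions $a_{j+1}<\cl(a_j)$ in particular give $a_{j+1}<a_j$, so $a_i<a_{i-1}<\cdots<a_0$ and hence $a_i=\min\{a_0,\dots,a_i\}$. Consequently $p(x)\cup x<\cl(a_i)$ is exactly the type $\Pi_{\{a_0,\dots,a_i\}}(x)$ of Lemma \ref{Lemma_cl_n}, and that lemma furnishes a (consistent) completion $p_{\{a_0,\dots,a_i\}}\in S_1(a_0,\dots,a_i)$. Any realization $a_{i+1}$ of it satisfies $a_{i+1}\models p$ and $a_{i+1}<\cl(a_i)$, so $(a_0,\dots,a_{i+1})\models\Pi^{i+1}$; iterating up to $i=n-1$ produces a realization of $\Pi^n$.

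For \emph{uniqueness} I would prove, by induction on $i\le n$, that any two realizations $\bar a=(a_0,\dots,a_n)$ and $\bar a'=(a_0',\dots,a_n')$ of $\Pi^n$ satisfy $\tp(a_0,\dots,a_i)=\tp(a_0',\dots,a_i')$. The case $i=0$ holds since $a_0,a_0'$ both realize the complete type $p$. For the step, assume the equality for $i$ and pick $\sigma\in\Aut(M)$ with $\sigma(a_j)=a_j'$ for $j\le i$. Because $\cl$ and $<$ are $\emptyset$-definable and $a_{i+1}<\cl(a_i)$, the image $\sigma(a_{i+1})$ realizes $p$ and satisfies $\sigma(a_{i+1})<\cl(a_i')$; since $a_i'=\min\{a_0',\dots,a_i'\}$ (both tuples share the descending order), both $\sigma(a_{i+1})$ and $a_{i+1}'$ realize $\Pi_{\{a_0',\dots,a_i'\}}$. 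By the uniqueness clause of Lemma \ref{Lemma_cl_n} they have the same type over $\{a_0',\dots,a_i'\}$, so $\tp(a_0',\dots,a_i',\sigma(a_{i+1}))=\tp(a_0',\dots,a_i',a_{i+1}')$. As $\sigma$ is an automorphism, $\tp(a_0,\dots,a_{i+1})=\tp(a_0',\dots,a_i',\sigma(a_{i+1}))$, and combining the two equalities gives $\tp(a_0,\dots,a_{i+1})=\tp(a_0',\dots,a_{i+1}')$. Taking $i=n$ shows $\Pi^n$ has a unique completion in $S_{n+1}(T)$.

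The argument is genuinely routine, which is why the statement is labelled an immediate corollary; there is no serious obstacle. The only two points requiring attention are the order-theoretic bookkeeping that identifies $a_i$ with $\min\{a_0,\dots,a_i\}$, so that the pertinent partial type is \emph{literally} the $\Pi_A$ to which Lemma \ref{Lemma_cl_n} applies, and the standard homogeneity step that upgrades uniqueness of each successive type $\tp(a_{i+1}/a_0\dots a_i)$ into uniqueness of the joint type over $\emptyset$.
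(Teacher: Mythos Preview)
Your proof is correct and is precisely the routine derivation the paper has in mind: the paper gives no proof at all (it just says ``As an immediate corollary we have the following''), and your induction via Lemma~\ref{Lemma_cl_n} together with monster-model homogeneity is the standard way to pass from uniqueness of each $\tp(a_{i+1}/a_0,\dots,a_i)$ to uniqueness of the joint type over~$\emptyset$. The bookkeeping identifying $a_i=\min\{a_0,\dots,a_i\}$ so that the relevant partial type is literally $\Pi_{\{a_0,\dots,a_i\}}$ is correct.
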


\begin{proof}[{\it Proof of Theorem 2}]  Fix a countable, discrete, endless order $\mathbb L=(L,<_L)$ and, by saturation,  find its isomorphic copy $A_L=\{ a_i\mid i\in L\}\subset p(M)$  which is closed under the successor function of $(p(M),<)$. 
Consider the type $\Sigma(x)=p(x)\cup \{x\neq a_i\mid i\in L\}$. We {\it claim} that there is a countable model $M_{\mathbb L}$ omitting $\Sigma$. Suppose not. Then, by the omitting types theorem, there exists a consistent formula $\phi(x, \bar a)$ ($\bar a\subset A_L$) forcing $\Sigma(x)$. Choose $\phi$ with $|\bar a|=n+1$ minimal possible and, without loss of generality, let $\bar a=(a_0,...,a_n)$. 
The minimality assumption guarantees that the sets $\cl(a_i) \ ( i\leq n)$ are pairwise disjoint. Since the formula $\phi(x,\bar a)$ is consistent with $p(x)$, the set $\phi(M,\bar a)$ meets $p(M)$ and Lemma \ref{Lemma_basic_simple}(c) applies:  $b=\min \phi(M,\bar a)$ is well-defined. Since $\phi(x,\bar a)\vdash \Sigma(x)$, $b\in p(M)$ and $\cl(b)\cap \cl(a_i)=\emptyset$ for all $i\leq n$. 
Hence the sets $\cl(b), \cl(a_0),...,\cl(a_{n})$ are pairwise disjoint. Re-enumerate the tuple $a_0,...,a_n,b$ in increasing (in $p(M)$) order, and note that the new tuple satisfies the type $\Pi^{n+1}(\bar x)$ from Corollary \ref{Cor_pntypes}.  
Since $\cl(S^{-1}(b))=\cl(b)$,  the same re-enumeration applied to the tuple $a_0,...,a_n, S^{-1}(b)$ satisfies $\Pi^{n+1}(\bar x)$, too. By Corollary \ref{Cor_pntypes} we have $\tp(\bar a,b)=\tp(\bar a,S^{-1}(b))$ and, in particular, $\models  \phi(S^{-1}(b),\bar a)$. That contradicts $b=\min\phi(M,\bar a)$, proving the claim and Theorem 2 as $p(M_{\mathbb L})\cong \mathbb L$. 
\end{proof}

\section{Corollaries and further questions}
\begin{Lemma}\label{Lemma_reldef_DO}
Suppose that $p\in S_1(T)$, $<_p$ is a relatively definable discrete order on $p(M)$ and $S_p$ is the successor function on $(p(M),<_p)$. Then for all $a\in p(M)$ there exists  an infinite $\{a\}$-definable discrete  order whose initial part is $\{S_p^n(a)\mid n\in\omega\}$. 
\end{Lemma}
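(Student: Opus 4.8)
The plan is to promote the relatively definable order $<_p$ to an honestly definable one via Fact~\ref{Fact1}(a), and then to cut out a discrete initial segment based at $a$, exactly as in the proof of Lemma~\ref{Lemma_basic_simple}(b). Write $C_0=\{S_p^n(a)\mid n\in\omega\}$, let $\phi(x,y)$ be the formula relatively defining $<_p$ on $p(M)$, and use Fact~\ref{Fact1}(a) to obtain $\theta_0(x)\in p(x)$ such that $\phi$ defines a linear order $<$ on $\theta_0(M)\supseteq p(M)$ extending $<_p$. The one genuine difficulty is that $\theta_0(M)$ may contain spurious elements lying strictly $<$-between consecutive points $S_p^{n}(a)$ and $S_p^{n+1}(a)$ of $C_0$; such elements would violate both discreteness and the initial-part requirement, so I must shrink $\theta_0$ to a formula in $p$ that excludes all of them simultaneously.

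This uniform exclusion is the crux, and the leverage is that all the relevant consecutive pairs share a single $2$-type. Indeed, since $a$ and $S_p^n(a)$ both realize $p$, some automorphism of the monster fixing $\emptyset$ sends $a\mapsto S_p^n(a)$; being an automorphism over $\emptyset$, it preserves the set $p(M)$ and the relatively definable order $<_p$, hence the successor $S_p$, so it carries $(a,S_p(a))$ to $(S_p^n(a),S_p^{n+1}(a))$. Thus $\tp(S_p^n(a),S_p^{n+1}(a))=:r(x,y)$ is independent of $n$. Because $(p(M),<_p)$ is discrete, no realization of $p$ lies strictly $<$-between two consecutive points, so $r(x,y)\cup p(z)\cup\{x<z<y\}$ is inconsistent; compactness then yields $\delta(x,y)\in r$ and $\theta(x)\in p(x)$ with $\theta\vdash\theta_0$ such that
\begin{equation*}
\models\forall x\,\forall y\,\forall z\,\bigl(\delta(x,y)\land\theta(z)\rightarrow\lnot(x<z<y)\bigr).
\end{equation*}
Since $\models\delta(S_p^n(a),S_p^{n+1}(a))$ for every $n$, this says precisely that no realization of $\theta$ falls strictly $<$-between consecutive points of $C_0$.

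With this in hand I would set $D=\{x\mid\theta(x)\land a\le x\land [a,x]\text{ is discretely ordered by }<\}$, an $\{a\}$-definable subset of $(\theta(M),<)$, and verify the conclusion routinely, in parallel with Lemma~\ref{Lemma_basic_simple}(b). The displayed condition forces $[a,S_p^n(a)]\cap\theta(M)=\{a,S_p(a),\dots,S_p^n(a)\}$, a finite, hence discretely ordered, set, so each $S_p^n(a)\in D$ and consecutive points of $C_0$ remain $<$-consecutive in $D$; discreteness of $(D,<)$ then follows since every subinterval of a discretely ordered interval is discretely ordered, and $a=\min D$. Finally, any $y\in D$ with $y<S_p^n(a)$ lies in $[a,S_p^n(a)]\cap\theta(M)=\{a,\dots,S_p^n(a)\}\subseteq C_0$, so $C_0$ is downward closed in $D$; hence $C_0$ is an initial part of the infinite discrete order $(D,<)$, of order type $\boldsymbol{\omega}$, as required. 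The main obstacle is the uniform removal of the spurious intermediate elements carried out in the second paragraph; once the constancy of $\tp(S_p^n(a),S_p^{n+1}(a))$ is noticed, the rest is bookkeeping.
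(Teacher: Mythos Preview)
Your proof is correct and follows essentially the same route as the paper's. Both arguments extend $<_p$ to a definable order via Fact~\ref{Fact1}(a), use compactness together with the constancy of the $2$-type of a consecutive pair $(a,S_p(a))$ to find a single $\theta\in p$ with no $\theta$-realization strictly between consecutive points, and then take the union of the discretely ordered $\theta(M)$-intervals $[a,x]$. The only cosmetic difference is packaging: the paper phrases the compactness step as showing that $S_p$ is relatively definable (namely $S_p=S_\theta$ on $p(M)$), whereas you phrase it as uniformly excluding spurious intermediate elements via the common $2$-type $r$; these are equivalent, and your explicit automorphism argument is exactly what the paper's ``it is easy to see'' is hiding.
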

\begin{proof}
Let $(D,<)$ be a definable linear order extending $(p(M),<_p)$. 
 First we prove that $S_p$ is a relatively definable function on $p(M)$. Let $a\in p(M)$ and $b=S_p(a)$.  Then the set $p(x)\cup\{a<x<b\}$ is inconsistent. By compactness, there is a formula $\theta(x)\in p$, without loss  implying $x\in D$,  such that $\theta(x)\vdash \lnot (a<x<b)$. Hence $b$ is the immediate successor of $a$ in the order $(\theta(M),<)$; $b=S_{\theta}(a)$, where the successor function   $S_{\theta}$ is definable. It is easy to see that $y=S_{\theta}(x)$ relatively defines $S_p$.

Let $a\in p(M)$ and let $D_0$ be the union of all intervals $[a,x]$ of $(\theta(M),<)$ that are discretely ordered by $<$.
Then $(D_0,<)$ satisfies the conclusion of the lemma.
\end{proof}

\begin{Corollary}
If $T$ admits an infinite, relatively definable, discrete order on the locus of some complete type, then $I(\aleph_0,T)=2^{\aleph_0}$. 
\end{Corollary}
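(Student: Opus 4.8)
The plan is to reduce the statement directly to Theorem 1 by means of Lemma \ref{Lemma_reldef_DO}, which is precisely the device that upgrades a relatively definable discrete order on a type-definable set to a genuinely (parametrically) definable discrete order on a definable set. So let $p\in S_1(T)$ and let $<_p$ be a relatively ($\emptyset$-)definable discrete order on $p(M)$ witnessing the hypothesis; since the order is infinite, $p(M)$ is infinite and $p$ is non-algebraic.

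First I would record the routine observation that $(p(M),<_p)$ has no endpoints. The property ``$x$ has no $<_p$-successor'' is relatively definable, so by completeness of $p$ it holds either for no element or for every element of $p(M)$; the latter would force $|p(M)|=1$, contradicting infiniteness. Hence the successor function $S_p$ is total on $p(M)$, and symmetrically there is no least element, so $\{S_p^n(a)\mid n\in\omega\}$ is infinite for every $a\in p(M)$.

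Next I would fix any $a\in p(M)$ and apply Lemma \ref{Lemma_reldef_DO} to obtain an infinite $\{a\}$-definable discrete order $(D_0,<)$ inside the monster model $M$. Since $M$ is $\aleph_0$-saturated and $(D_0,<)$ is parametrically definable (with the single parameter $a$), this is exactly what it means, in the sense fixed in the introduction, for $T$ to admit an infinite, definable, discrete, linear order. Theorem 1 then delivers $I(\aleph_0,T)=2^{\aleph_0}$.

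I do not expect a serious obstacle: the substantive work---passing from the relatively definable order on the type-definable set $p(M)$ to an honest definable discrete order through Fact \ref{Fact1} and the union-of-discretely-ordered-intervals construction---has already been done in Lemma \ref{Lemma_reldef_DO}. The only things to verify are that the resulting order is infinite (handled by the endpoint observation) and that Theorem 1 allows parameters in its order, which it does, since ``admits'' refers to a parametrically definable order found in an $\aleph_0$-saturated model. Should one wish to avoid citing Theorem 1, the identical conclusion follows from the smallness dichotomy: if $T$ is not small then $I(\aleph_0,T)=2^{\aleph_0}$ at once, while if $T$ is small then $T(a)$ is small, Lemma \ref{Lemma_simpleExist} applied to $T(a)$ and $(D_0,<)$ yields a simple type over $\{a\}$, Theorem 2 gives $I(\aleph_0,T(a))=2^{\aleph_0}$, and the constant-absorption inequality $I(\aleph_0,T(a))\leq I(\aleph_0,T)\cdot\aleph_0$ transfers this back to $T$.
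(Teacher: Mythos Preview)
Your proposal is correct and follows exactly the route the paper intends: the corollary is stated without proof immediately after Lemma \ref{Lemma_reldef_DO}, and the implicit argument is precisely to apply that lemma to obtain a parametrically definable infinite discrete order and then invoke Theorem 1. Your added endpoint observation (ensuring $S_p$ is total and the resulting order is infinite) and the alternative route via Lemma \ref{Lemma_simpleExist} and Theorem 2 are reasonable elaborations, but the core reduction is the same as the paper's.
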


\begin{Corollary}\label{Corolary_Econvex}
Suppose that $I(\aleph_0,T)<2^{\aleph_0}$,  $<$ is a relatively definable linear order on the locus of $p\in S_1(T)$ and $E$ is a relatively definable convex\footnote{Classes are convex.} equivalence relation with infinitely many classes on $p(M)$. Then the quotient order $(p(M)/E,<)$ is dense and endless. 
\end{Corollary}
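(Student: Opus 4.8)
The plan is to exploit the homogeneity of the monster model together with the preceding Corollary. Since $<$ and $E$ are relatively $\emptyset$-definable and $p\in S_1(T)$, every $\sigma\in\Aut(M)$ preserves both relations on $p(M)$ and permutes the $E$-classes; hence $\sigma$ induces an order-automorphism $\bar\sigma$ of the quotient order $Q:=(p(M)/E,<)$. Because $M$ is a monster model and $p$ is a complete type over $\emptyset$, $\Aut(M)$ acts transitively on $p(M)$, so the induced action on $Q$ is transitive as well: given classes $[a],[b]$ with $a,b\models p$, any $\sigma$ with $\sigma(a)=b$ yields $\bar\sigma([a])=[b]$. Thus $Q$ is an infinite, homogeneous linear order, and it remains to show that such an order is dense and endless, invoking $I(\aleph_0,T)<2^{\aleph_0}$ only for density.

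First I would establish endlessness directly from transitivity, with no appeal to the cardinality hypothesis. If $Q$ had a maximum $\beta$, then picking any $\gamma\neq\beta$ (possible as $Q$ is infinite) and $\bar\sigma$ with $\bar\sigma(\gamma)=\beta$, the fact that an order-automorphism must fix the unique maximum gives $\bar\sigma(\beta)=\beta=\bar\sigma(\gamma)$, whence $\gamma=\beta$, a contradiction. The symmetric argument rules out a minimum, so $Q$ is endless.

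For density I would argue by contradiction through the dichotomy that a homogeneous linear order is either dense or discretely ordered. If $Q$ is not dense, some pair of classes $[a]<[c]$ is consecutive; transporting this by the transitive action shows that every class has an immediate successor, and, combined with endlessness, that $S$ and $S^{-1}$ are total on $Q$. Hence every singleton $\{[x]\}$ is the open interval determined by $S^{-1}([x])$ and $S([x])$, so $Q$ is discretely ordered. Passing to $M^{\mathrm{eq}}$ and letting $q=\tp(a/E)\in S_1(T^{\mathrm{eq}})$, the locus $q(M^{\mathrm{eq}})$ is exactly $Q$, on which $<$ is relatively $\emptyset$-definable, as it descends from the $\emptyset$-definable relation $x<y\land\lnot E(x,y)$ on $p(M)$; thus $q$ carries an infinite, relatively definable, discrete order on its locus. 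The preceding Corollary then forces $I(\aleph_0,T^{\mathrm{eq}})=2^{\aleph_0}$, and since $I(\aleph_0,T^{\mathrm{eq}})=I(\aleph_0,T)$ this contradicts $I(\aleph_0,T)<2^{\aleph_0}$. Therefore $Q$ must be dense.

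The step I expect to be the main obstacle is the reduction of the discrete quotient to a relatively definable discrete order on the locus of a genuine type, i.e.\ the passage to $M^{\mathrm{eq}}$: one must verify that $<$ is genuinely relatively definable on the locus of the imaginary type $\tp(a/E)$ and that invoking the preceding Corollary over $T^{\mathrm{eq}}$ is legitimate, which rests on the standard identity $I(\aleph_0,T^{\mathrm{eq}})=I(\aleph_0,T)$. An alternative, avoiding imaginaries, would transfer the successor structure on $Q$ back to a relatively definable discrete order on $p(M)$ after selecting $E$-class representatives, but this route looks less clean than working in $M^{\mathrm{eq}}$.
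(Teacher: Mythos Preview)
Your approach is essentially the paper's: both establish that the quotient is endless and either dense or discrete, then pass to imaginaries and invoke the preceding Corollary to rule out the discrete case. Where you argue via transitivity of $\Aut(M)$ on $p(M)$, the paper simply says ``by saturation''; these are interchangeable since the monster is strongly homogeneous.

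On the point you flag as the main obstacle, the paper handles it exactly as one would expect: rather than quotienting directly by the relatively definable $E$, it first applies Fact~\ref{Fact1} (compactness) to extend $<$ to a definable linear order $(D,<)$ and $E$ to a definable convex equivalence relation $E_D$ on $D$, and only then passes to the $E_D$-sort of $T^{eq}$. This makes $a/E_D$ a genuine imaginary, and the induced order on the locus of $p_{/E_D}$ is relatively definable there, so the preceding Corollary applies cleanly. Your write-up would be complete once you insert this extension step before forming $\tp(a/E)$; as you suspected, everything else goes through.
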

\begin{proof}
 As in Fact \ref{Fact1},  one shows that there is a definable linear order  $(D,<)$  extending $(p(M),<)$ and a definable   convex (on $(D,<)$) equivalence relation $E_D$ extending $E$. Now, switch to the $E_D$-sort in $T^{eq}$ (whose domain is $D/E_D$) and consider the type $p_{/E_D}$ there; it is linearly ordered by $<$ and $(p_{/E_D}(M^{eq}),<_{E_D}) \cong (p(M)/E,<)$. By saturation, the order $(p_{/E_D}(M^{eq}),<_{E_D})$ is  endless and either dense or discrete. The second option is ruled out by $I(\aleph_0,T)<2^{\aleph_0}$. 
\end{proof}

Let  $T$ be an $L_{\omega_1,\omega}$-sentence and let $Mod(T)$ be the set of all models of $T$ whose domain is $\mathbb N$. $Mod(T)$ is endowed with the formula topology, 
where basic clopen sets are $U_{\phi,\bar n}=\{N\in Mod(T)\mid N\models \phi(\bar n)\}$ 
($\phi(\bar x)$ a formula and $\bar n\in\mathbb N^{|\bar x|}$). 
$Mod(T)$ is a Polish space and $\cong$ is an analytic equivalence relation on $Mod(T)$. 
Borel reductions of pairs $(X,E)$ where $X$ is a Borel space and $E$ an equivalence relation on $X$ 
were introduced by Friedman and Stanley in \cite{FS}. 
In our context, pairs are $(Mod(T),\cong)$. $T_1$ is Borel reducible to $T_2$, denoted by $T_1\leq_B T_2$,  if there is a Borel function 
$f:Mod(T_1)\to Mod(T_2)$ such that for all $M_1,M_2\in Mod(T_1)$: $M_1\cong M_2$ if and only if $f(M_1)\cong f(M_2)$. 
$\leq_B$ is a  quasi-order and $T_1\leq_B T_2$ suggests that the  isomorphism problem for $Mod(T_1)$ is not more  complex than that for $T_2$.  Friedman and Stanley defined: 
$T$ is {\it Borel complete} if $T'\leq_B T$ for all $T'$. The intuition behind this definition is that  the isomorphism problem of a Borel complete theory is as complicated as possible.  In \cite{FS} 
several Borel complete theories were found, and among them is the theory of linear orders.  
That is why Theorem 2 naturally motivates asking Question 1. 

\smallskip
Let us re-inspect the proof of Theorem 2. We started with a small theory $T$ interpreting an infinite discrete order.
Then, in order to obtain a simple type, we named a parameter $c$. That did not affect the aimed conclusion $I(\aleph_0,T)=2^{\aleph_0}$, because $I(\aleph_0,T(c))=2^{\aleph_0}$ implies $I(\aleph_0,T)=2^{\aleph_0}$. But, whether that affects Borel completeness of $T$, we do not know:

\begin{Question}
If $T$ is small and $T(\bar a)$ is Borel complete for some  $\bar a\in M^n$, must $T$ be Borel complete?
\end{Question}

In 2015, Chris Laskowski asked a more general question\footnote{Thanks to the referee for pointing this to us.}:

\begin{Question}
Can Borel completeness be gained or lost by naming a constant?
\end{Question}

This question was posed and discussed in Laskowski's conference talk \cite{Chris}, where a partial result due to Richard Rast was mentioned:  $\cong_T$ and $\cong_{T(c)}$ are simultaneously Borel. However, both Question 2 and 3 seem to be widely open.

Going back to the proof of Theorem 2, we have ``coded'' an arbitrary endless, discrete, linear order $\mathbb L$ by a model $M_{\mathbb L}$ with $\mathbb L\cong (p(M_{\mathbb L}),<)$. That clearly implies $I(\aleph_0,T)=2^{\aleph_0}$, but we do not know whether $T$ has to be Borel complete.

It is quite natural to expect that if the theory has  some part that is maximally complicated in some sense, then the theory itself is maximally complicated, too, 
But we could not answer the following question, even assuming that $T$ is small:

 \begin{Question}
If $T_{\theta}$  is Borel complete for some formula $\theta(x)$, must $T$ be Borel complete?
\end{Question}


\begin{thebibliography}{9}
\bibitem{FS}  H.Friedman and L.Stanley. "A Borel reducibility theory for classes of countable structures." The Journal of Symbolic Logic 54.3 (1989)  pp.894-914.
\bibitem{DI} D.Ili\'c. "Simple types in discretely ordered structures." Archive for Mathematical Logic Vol.53.7 (2014)  pp.929-947.
\bibitem{Chris} Michael C.\,Laskowski. "Borel complexity of complete, first order theories." Second Vaught’s conjecture conference, UC-Berkeley (2015) 
\bibitem{Mayer} L.Mayer. "Vaught's conjecture for o-minimal theories" J. Symbolic Logic. Vol.53 (1988) pp.146–159.
\bibitem{MT} S.Moconja, P.Tanovi\'c.
"Stationarily ordered types and the number of countable models" Annals of Pure and Applied Logic,
Vol.171.3 (2020).
\bibitem{Rubin}M.Rubin. "Theories of linear order." Israel Journal of Mathematics 17.4 (1974) pp.392-443.
\bibitem{Sh} S.Shelah.  "End extensions and numbers of countable models" Journal of Symbolic Logic, 43(3) (1978) pp.550-562. doi:10.2307/2273531
\bibitem{Steel}J.Steel. "On Vaught's conjecture" A. Kechris, Y. Moschovakis (ed.) , Cabal Seminar '76-77, Lecture Notes in Mathematics, 689, Springer (1978) pp.193–208.
\bibitem{T} P.Tanovi\'c. "Types directed by constants." Annals of Pure and Applied Logic 161.7 (2010): pp.944-955.
\bibitem{Vaught}R.Vaught, "Denumerable models of complete theories". Infinitistic Methods (Proc. Symp. Foundations Math.  Warsaw, 1959), Państwowe Wydawnictwo Nauk. Warsaw/Pergamon Press (1961) pp.303–321.

\end{thebibliography}
\end{document}